\renewcommand{\geq}{\geqslant}
\renewcommand{\leq}{\leqslant}
\theoremstyle{plain}
\newtheorem{theorem}{Theorem}[section]
\newtheorem{lemma}[theorem]{Lemma}
\newtheorem{corollary}[theorem]{Corollary}
\theoremstyle{remark}
\newtheorem{remark}[theorem]{Remark}
\newcommand{\CC}{\ensuremath{\mathbb{C}}}
\newcommand{\QQ}{\ensuremath{\mathbb{Q}}}
\newcommand{\ZZ}{\ensuremath{\mathbb{Z}}}
\DeclareMathOperator{\Spec}{Spec}
\DeclareMathOperator{\h}{\mathsf{h}} 
\DeclareMathOperator{\NS}{\mathrm{NS}}
\title[{Algebraic cycles on varieties of generalized Kummer type}]{Algebraic cycles on hyper-K\"ahler varieties of generalized Kummer type}
\author{Salvatore Floccari}
\address{Institute of Algebraic Geometry, Leibniz University Hannover, Germany}
\address{\textit{Current address :} Fakult\"at f\"ur Mathematik, Universit\"at Bielefeld, Germany} 
\email{sfloccari@math.uni-bielefeld.de}
\author{Mauro Varesco}
\address{Mathematisches Institut, Universit\"at Bonn, Endenicher Allee 60, 53115 Bonn, Germany}
\email{varesco@math.uni-bonn.de}
\begin{document}
	
	\begin{abstract}
		We prove the conjectures of Hodge and Tate for any four-dimensional hyper-K\"ahler variety of generalized Kummer type. For an arbitrary variety $X$ of generalized Kummer type, we show that all Hodge classes in the subalgebra of the rational cohomology generated by $H^2(X,\QQ)$ are algebraic.
	\end{abstract}

	\maketitle
	
	\section{Introduction}
	Despite many efforts, the Hodge conjecture remains widely open. Much work has been devoted to the study of algebraic cycles on abelian varieties. In spite of several positive results, already in this setting the Hodge conjecture proved to be a rather formidable problem. It is in general open for abelian varieties of dimension at least $4$ (see \cite{moonenZahrinHodge, moonenZarhin}).
	
	Another interesting class of varieties with a trivial canonical bundle is that of hyper-K\"ahler varieties. See the articles \cite{beauville1983varietes} and \cite{Huy99} for general information. In the present paper we study the Hodge conjecture for hyper-K\"ahler varieties of generalized Kummer type ($\mathrm{Kum}^n$-varieties for short). By definition, these are deformations of Beauville's generalized Kummer varieties~(\cite{beauville1983varietes}) constructed from abelian surfaces.
	
	Important progress on $\mathrm{Kum}^n$-varieties came from the works of O'Grady~\cite{O'G21} and Markman \cite{markman2019monodromy}. They uncovered the relation between a $\mathrm{Kum}^n$-variety~$X$ and its intermediate Jacobian $J^3(X)$, which is shown to be an abelian fourfold of Weil type. Their results lead to a Torelli theorem for $\mathrm{Kum}^n$-varieties in terms of the Hodge structure on $H^3(X,\ZZ)$.
	Markman further constructs an algebraic cycle on $X\times J^3(X)$ realizing the canonical isomorphism $H^3(X,\QQ)\cong  H^1(J^3(X),\QQ)(-1)$ of Hodge structures, for any variety $X$ of $\mathrm{Kum}^n$-type. It follows that the Kuga-Satake correspondence (\cite{KUGA1967}) is algebraic for these varieties, as shown by Voisin \cite{voisinfootnotes}. See also \cite{VV} and \cite{vangeemenSpinor} for an account of these results.
	
	In the recent article \cite{floccariHCKum3}, the first author proved the Hodge conjecture for any six-dimensional variety~$K$ of~$\mathrm{Kum}^3$-type. No locally complete family of hyper-K\"ahler or abelian varieties of dimension at least $4$ satisfying the Hodge conjecture was previously known. 
	An important ingredient in the proof is the construction from \cite{floccariKum3} of a~K3 surface~$S_K$ naturally associated with $K$, and the fact that the Hodge conjecture holds for any power of this K3 surface. This was proven in \cite[Corollary 5.8]{floccariKum3}, using a theorem of the second author~\cite{varesco} and the aforementioned results of O'Grady, Markman and Voisin.
		
	In this article we obtain some results on the Hodge conjecture for varieties of generalized Kummer type of arbitrary dimension.
	
	\begin{theorem}\label{thm:main}
		Let $X$ be a projective manifold of $\mathrm{Kum}^n$-type, $n\geq 2$. Denote by $A_2^{\bullet}(X) \subset H^{\bullet}(X,\QQ)$ the subalgebra of the rational cohomology generated by $H^2(X,\QQ)$. Then any class in $ A_2^{2j}(X) \cap H^{j,j}(X)$ is algebraic, for any~$j$. 
	\end{theorem}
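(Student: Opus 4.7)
My plan is to reduce the statement to the Hodge conjecture for powers of the intermediate Jacobian $J^3(X)$. The key input is the algebraicity of the Kuga-Satake correspondence for $X$ of $\mathrm{Kum}^n$-type, established by Markman and Voisin as recalled in the introduction.

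First I would combine Markman's algebraic cycle on $X \times J^3(X)$ realizing the Hodge isomorphism $H^3(X,\QQ) \cong H^1(J^3(X),\QQ)(-1)$ with the Clifford-algebra construction underlying Kuga-Satake. This produces an algebraic cycle $\Gamma \in \CH^*(X \times J^3(X)^r)$ (for some small $r$) inducing a split embedding of rational Hodge structures
\[
\psi : H^2(X, \QQ) \hookrightarrow H^*(J^3(X)^r, \QQ),
\]
together with an algebraic left inverse $\Gamma' \in \CH^*(J^3(X)^r \times X)$. Concretely, $\psi$ factors the natural Kuga-Satake map $H^2(X,\QQ)\hookrightarrow \End(H^1(J^3(X),\QQ))$ through Poincaré duality and the Künneth formula.

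Now take a Hodge class $\alpha \in A_2^{2j}(X) \cap H^{j,j}(X)$. Since the iterated cup product $\mu_j : H^2(X,\QQ)^{\otimes j} \to H^{2j}(X,\QQ)$ is a surjective morphism of Hodge structures onto $A_2^{2j}(X)$, I lift $\alpha$ to a Hodge class $\tilde\alpha \in H^2(X,\QQ)^{\otimes j}$, viewed inside $H^{2j}(X^j, \QQ)$ via Künneth. The external product $\Gamma^{\boxtimes j}$ is an algebraic cycle on $X^j\times J^3(X)^{rj}$ transporting $\tilde\alpha$ to a Hodge class on $J^3(X)^{rj}$. Since $J^3(X)$ is an abelian fourfold of Weil type, the Hodge classes on any power $J^3(X)^N$ are generated as a $\QQ$-algebra by divisor classes, endomorphism-induced classes, and Weil classes; Markman has shown the Weil classes on $J^3(X)$ itself to be algebraic, and this extends to all powers by external products of algebraic cycles. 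Hence the image of $\tilde\alpha$ is algebraic on $J^3(X)^{rj}$. Pulling back by $(\Gamma')^{\boxtimes j}$ shows that $\tilde\alpha$ is algebraic on $X^j$, and finally $\alpha = \Delta^*\tilde\alpha$ is algebraic on $X$, where $\Delta: X \hookrightarrow X^j$ is the diagonal.

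The main obstacle is the first step, namely producing the algebraic cycle $\Gamma$ realizing the Kuga-Satake embedding for $H^2$ (as opposed to $H^3$), together with an algebraic left inverse: although Voisin's observation ensures this in principle, an explicit cycle-theoretic implementation compatible with tensor powers is needed. A secondary difficulty is the Hodge conjecture for powers of $J^3(X)$: one must justify that the Hodge ring of $J^3(X)^N$ is generated by divisor, endomorphism, and Weil classes (via the representation theory of the Mumford-Tate group of $J^3(X)$), and that Markman's algebraicity of the Weil classes on $J^3(X)$ propagates to all powers through external products. Once these two ingredients are in place, the transport argument sketched above is formal.
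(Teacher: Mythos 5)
Your strategy of transporting Hodge classes to powers of the intermediate Jacobian is genuinely different from the paper's route, but it founders on the step you dismiss as a ``secondary difficulty'': the Hodge conjecture for powers of $J^3(X)$ is not available, and the two claims you make to justify it are both problematic. First, the assertion that the Hodge ring of $J^3(X)^N$ is generated by divisor classes, endomorphism-induced classes and Weil classes holds only when the Mumford--Tate group of $J^3(X)$ is the generic one for the Weil family; since $X$ ranges over \emph{all} projective $\mathrm{Kum}^n$-manifolds, $J^3(X)$ can be an arbitrary (in particular highly degenerate) member of that family, with a smaller Mumford--Tate group and exotic Hodge classes on its powers for which no algebraicity statement is known. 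Second, even in the generic case, the Weil classes on a power $A^N$ of an abelian fourfold of Weil type live in $\bigwedge^4_K H^1(A^N,\QQ)$ with $\dim_K H^1(A^N,\QQ)=4N$; this space is much larger than the span of external products of Weil classes on the factors, so Markman's algebraicity of Weil classes on $J^3(X)$ does not ``propagate to all powers by external products''. This is exactly the kind of obstruction that keeps the Hodge conjecture open for abelian fourfolds, and your argument inherits it rather than circumventing it. (Your first step --- an algebraic, algebraically split Kuga--Satake embedding for $H^2$ --- is essentially available thanks to O'Grady--Markman--Voisin together with Foster's algebraicity of the K\"unneth projector onto $H^2$, so that part of the outline is sound, as is the lifting of a Hodge class in $A_2^{2j}(X)$ to a Hodge class in $H^2(X,\QQ)^{\otimes j}$ by semisimplicity of polarizable Hodge structures.)

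The paper avoids the abelian-fourfold bottleneck by routing through a K3 surface instead: it first produces (Lemma \ref{lem:due}, via the surjectivity of the period map) a projective $\mathrm{Kum}^3$-variety $K$ with a Hodge similitude $H^2(K,\QQ)\to H^2(X,\QQ)$, which is algebraic by Theorem \ref{thm:varesco}; it then composes with the algebraic similitude $H^2_{\mathrm{tr}}(S_K,\QQ)\to H^2_{\mathrm{tr}}(K,\QQ)$ to the K3 surface $S_K$ of \cite{floccariKum3}, for which the Hodge conjecture is known for \emph{all powers}. A motivic argument (Foster's standard conjectures for $M_X$, conservativity of the realization) upgrades the cohomological isomorphism to an isomorphism of motives, placing the submotive $\mathsf{a}_2(X)$ realizing $A_2^{\bullet}(X)$ inside $\langle \h(S_K)\rangle_{\mathsf{Mot}}$, where the realization functor is full. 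If you want to salvage your approach, you would need an input playing the role of ``Hodge conjecture for all powers of the receiving variety'', and that is precisely what the K3 surface $S_K$ supplies and what $J^3(X)$ does not.
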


	To prove this theorem we use the works of Foster \cite{foster} and of the second author~\cite{varesco2023hodge} to show that an arbitrary variety $X$ of $\mathrm{Kum}^n$-type is related via an algebraic correspondence to the K3 surface~$S_K$ associated to some~$\mathrm{Kum}^3$-variety~$K$. Theorem \ref{thm:main} is then deduced from the Hodge conjecture for the powers of~$S_K$.  Our proof leads to the expectation that any variety of $\mathrm{Kum}^n$-type is naturally associated with a K3 surface, generalizing the construction given in \cite{floccariKum3} for the six-dimensional case.
	We remark that it should be possible 
	to obtain the theorem via the representation-theoretic methods of \cite{varesco}.
		
	For $j\leq n$, cup-product induces an isomorphism $A_2^{2j}(X)\cong \mathrm{Sym}^{j}(H^2(X,\QQ))$ of Hodge structures, by a theorem of Verbitsky \cite{verbitsky1996cohomology}.
	However, Theorem~\ref{thm:main} is not sufficient to prove the Hodge conjecture for $X$ as not all Hodge classes lie in $A^{\bullet}_2(X)$ (see \cite{green2019llv}). 
	For~$n=3$, the full Hodge conjecture proven in~\cite{floccariHCKum3}  is a stronger result and requires considerably more work. 
	For $n=2$, the Hodge classes in the complement of~$A_2^{\bullet}(X)$ form an $80$-dimensional subspace of the middle cohomology; Hassett and Tschinkel have shown in \cite{hassettTschinkel} that these classes are algebraic, for any~$X$ of $\mathrm{Kum}^2$-type.
	Hence, Theorem~\ref{thm:main} yields the following.
	\begin{corollary}\label{cor:Kum2}
		Let $X$ be a projective manifold of $\mathrm{Kum}^2$-type. Then the Hodge conjecture holds for $X$, i.e., $H^{j,j}(X)\cap H^{2j}(X,\QQ)$ consists of algebraic classes for any $j$.
	\end{corollary}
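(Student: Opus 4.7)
The plan is to assemble Theorem \ref{thm:main} with the theorem of Hassett and Tschinkel \cite{hassettTschinkel} recalled above, using hard Lefschetz to reduce all non-middle degrees to the subalgebra $A_2^{\bullet}(X)$.

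First I would verify that $H^{2j}(X,\QQ) = A_2^{2j}(X)$ for every $j \neq 2$. The cases $j=0,1$ are tautological, since $A_2^{0}(X) = \QQ$ and $A_2^{2}(X) = H^2(X,\QQ)$ by the very definition of $A_2^{\bullet}(X)$. For $j \in \{3,4\}$ I would fix an ample class $h \in H^2(X,\QQ)$ and invoke hard Lefschetz: cup-product by $h^{2j-4}$ induces an isomorphism of Hodge structures $H^{8-2j}(X,\QQ) \xrightarrow{\sim} H^{2j}(X,\QQ)$. Since $h^{2j-4} \in A_2^{\bullet}(X)$, the equality $H^{8-2j}(X,\QQ) = A_2^{8-2j}(X)$ obtained at the preceding step propagates upward to degree $2j$. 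Theorem \ref{thm:main} then implies that every Hodge class in $H^{2j}(X,\QQ)$ is algebraic whenever $j \neq 2$.

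The remaining case is the middle cohomology. Since $A_2^4(X)$ is a sub-Hodge structure of $H^4(X,\QQ)$, I would choose a Hodge-theoretic complement $V$ (for instance, the orthogonal with respect to a polarization) and decompose any Hodge class $\alpha \in H^{2,2}(X) \cap H^4(X,\QQ)$ as $\alpha = \alpha_1 + \alpha_2$ with $\alpha_1 \in A_2^4(X)$ and $\alpha_2 \in V$. Both summands are themselves Hodge classes: the first is algebraic by Theorem \ref{thm:main}, while the second lies in the $80$-dimensional space of Hodge classes outside $A_2^{\bullet}(X)$ singled out in the introduction, and these are algebraic by \cite{hassettTschinkel}. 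The conceptual content of the argument therefore sits in these two inputs; the only obstacle beyond the assembly is checking that the Hassett--Tschinkel construction indeed produces algebraic representatives spanning the entire $80$-dimensional complement, which is precisely what they prove.
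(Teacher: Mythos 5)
Your proposal is correct and takes essentially the same route as the paper: Hodge classes in $A_2^{\bullet}(X)$ are handled by Theorem \ref{thm:main}, and the $80$-dimensional complement of $A_2^{4}(X)$ in the middle cohomology, which consists of Hodge classes surviving all deformations, is covered by the theorem of Hassett and Tschinkel. The only cosmetic difference is that you recover $H^{6}(X,\QQ)=A_2^{6}(X)$ and $H^{8}(X,\QQ)=A_2^{8}(X)$ by hard Lefschetz, whereas the paper reads off the full description of the complement of $A_2^{\bullet}(X)$ from Looijenga--Lunts; both are fine, and your splitting $\alpha=\alpha_1+\alpha_2$ in $H^4$ is exactly the paper's argument (one should just take the complement $V$ to be the canonical $80$-dimensional space itself, or note that changing $V$ only shifts $\alpha_2$ by a Hodge class in $A_2^{4}(X)$).
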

	
	Let now $k$ be a finitely generated field of characteristic $0$, with algebraic closure $\bar{k}$, and let $X/k$ be a smooth and projective variety over $k$. Given a prime number $\ell$, the absolute Galois group of $k$ acts on the $\ell$-adic \'etale cohomology of $X_{\bar{k}}$. In analogy with the Hodge conjecture, the Tate conjecture predicts that the subspace of Galois invariants in~$H_{\text{\'et}}^{2j}(X_{\bar{k}}, \QQ_{\ell}(j))$ is spanned by the fundamental classes of $k$-subvarieties of~$X$. See \cite{totaro} for general information on the Tate conjecture.
	
	\begin{corollary}\label{cor:TCKum2}
		Let $k\subset \CC$ be a finitely generated field and let $X/k$ be a smooth and projective variety such that $X_{\CC}$ is of $\mathrm{Kum}^2$-type. Then, for any prime number $\ell$, the strong Tate conjecture holds for~$X$, i.e., the Galois representations $H_{\text{\'et}}^{j}(X_{\bar{k}}, \QQ_{\ell})$ are semisimple and the subspace of Galois invariants in $H^{2j}_{\text{\'et}}(X_{\bar{k}}, \QQ_{\ell}(j))$ is the $\QQ_{\ell}$-span of fundamental classes of $k$-subvarieties of~$X$, for any $j$.
	\end{corollary}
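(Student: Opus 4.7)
The plan is to bootstrap the Hodge conjecture of Corollary~\ref{cor:Kum2} to the Tate conjecture by verifying the Mumford--Tate conjecture for $X$ and invoking Faltings' theorem for semisimplicity. The starting point is that the Kuga--Satake correspondence of $X$ (algebraic by Voisin, as recalled in the introduction) together with Markman's algebraic cycle realizing $H^3(X,\QQ)\cong H^1(J^3(X),\QQ)(-1)$ exhibit the motive of $X$ as a direct summand of a tensor construction on the motive of the abelian variety $A$ obtained as the product of the Kuga--Satake variety of $H^2(X)$ with $J^3(X)$. Since these correspondences are genuine algebraic cycles on $X\times A$, they induce $\Gal(\bar k/k)$-equivariant maps in $\ell$-adic cohomology after replacing $k$ by a finite extension, which we tacitly absorb into $k$.

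Second, I would establish the Mumford--Tate conjecture for $X$. The Kuga--Satake factor is attached to a Hodge structure of $K3$-type of $\mathrm{Kum}$-origin, for which the Mumford--Tate conjecture is already known. The intermediate Jacobian $J^3(X)$ is a Weil abelian fourfold whose Mumford--Tate group is explicitly controlled by the O'Grady--Markman description, and the Mumford--Tate conjecture can be checked directly in this case. Because the motive of $X$ embeds into a tensor construction on $\h(A)$ via algebraic correspondences, the conjecture for $A$ transfers to $X$, giving $G_{\ell}^{\circ}=\MT(X_{\CC})\otimes\QQ_{\ell}$ inside $\GL(H^{\bullet}_{\et}(X_{\bar k},\QQ_{\ell}))$.

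The Tate conjecture then follows formally: any Galois-invariant class in $H^{2j}_{\et}(X_{\bar k},\QQ_{\ell}(j))$ is fixed by $G_{\ell}^{\circ}$, hence under the Betti--$\ell$-adic comparison corresponds to an $\MT(X_{\CC})$-invariant, i.e.\ Hodge, class in $H^{2j}(X_{\CC},\QQ)(j)\otimes\QQ_{\ell}$; Corollary~\ref{cor:Kum2} makes this class algebraic. Semisimplicity is inherited from Faltings' theorem: semisimplicity of the $\ell$-adic $H^1$ of abelian varieties passes to tensor constructions and to direct summands cut out by algebraic projectors, so the embedding $\h(X)\hookrightarrow \h(A)^{\otimes N}(m)$ transports it from $A$ to $X$. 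The main obstacle is to pin down the Mumford--Tate conjecture for $A$ in exactly the form needed, in particular for its Weil-fourfold factor, and to ensure that the correspondences and projectors cutting out $\h(X)$ from tensors of $\h(A)$ are defined over a finite extension of $k$; once this is in place, the implication from Hodge to Tate together with semisimplicity is essentially automatic.
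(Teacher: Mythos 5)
Your overall logical skeleton is exactly the paper's: semisimplicity of the Galois representations plus the Mumford--Tate conjecture reduce the Tate conjecture for $X/k$ to the Hodge conjecture for $X_{\CC}$, which is Corollary~\ref{cor:Kum2}. The difference is that the paper obtains both inputs by citation --- the Mumford--Tate conjecture and semisimplicity are already known for every hyper-K\"ahler variety of known deformation type (\cite{floccari2019}, \cite{soldatenkov19}, and \cite[Theorem 1.18]{FFZ}), and the implication ``Mumford--Tate $+$ Hodge $\Rightarrow$ Tate'' is \cite[Proposition 2.3.2]{moonen17} --- whereas you attempt to re-derive the Mumford--Tate conjecture and semisimplicity from the Kuga--Satake and Markman correspondences.

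That re-derivation has a genuine gap. The claim that the Mumford--Tate conjecture for the Weil-type abelian fourfold $J^3(X)$ ``can be checked directly'' is unsupported, and it is precisely the delicate point: abelian fourfolds of Weil type carry exceptional Hodge classes and are among the cases where the Mumford--Tate group can be smaller than expected, and proving the Mumford--Tate conjecture for $H^3(X)$ --- equivalently for $J^3(X)$ --- is the substantive content of the cited work of Floccari--Fu--Zhang for the $\mathrm{Kum}^n$ deformation type. In addition, embedding the \emph{full} motive $\h(X)$ into a tensor construction on $\h(A)$ requires more than the Kuga--Satake correspondence and Markman's cycle, which only control $H^2$ and $H^3$: you must also handle $H^4$ (the $80$-dimensional Hassett--Tschinkel piece, which works out because those classes are algebraic) and you need the algebraicity of the K\"unneth projectors, i.e.\ Foster's theorem. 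None of this is fatal --- each ingredient exists in the literature --- but as written you have replaced a one-line citation with an incomplete proof of a hard theorem. Quoting the known Mumford--Tate conjecture for hyper-K\"ahler varieties of known deformation type makes your argument collapse to the paper's.
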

	
	A third conjecture, the Mumford-Tate conjecture, connects those of Hodge and Tate; see~\cite[\S2.1]{moonen2017} for its statement. While this is a hard open problem in itself, the Mumford-Tate conjecture has been proven for any hyper-K\"ahler variety of known deformation type in~\cite{floccari2019}, \cite{soldatenkov19} and \cite{FFZ}. 
	As a consequence, the conjectures of Hodge and Tate are equivalent for such a variety; in particular, Corollary \ref{cor:TCKum2} is in fact equivalent to Corollary \ref{cor:Kum2}.
	
	\subsection*{Aknowledgements}
	We are grateful to Stefan Schreieder for many suggestions and remarks. We thank Ben Moonen for his comments on a first version of this draft, and the anonymous referee for his remarks. The first named author was partially funded by the Deutsche Forschungsgemeinschaft (DFG, German Research Foundation) – Project-ID 491392403 – TRR 358. The second named author was funded by ERC Synergy Grant HyperK, Grant agreement ID 854361.	
	
	\section{Motives}
	Grothendieck's theory of motives provides a useful framework to study the Hodge conjecture. 
	We will work with the category of homological motives with rational coefficients over~$\CC$, which we denote by~$\mathsf{Mot}$; see \cite{scholl1994classical} or \cite{andre} for its construction. The objects of $\mathsf{Mot}$ are triples $(X, p, n)$ where~$X$ is a smooth and projective complex variety, $p$ is an idempotent correspondence given by an algebraic class in $H^{2\dim X} (X\times X,\QQ)$, and $n$ is an integer. Morphisms are given by algebraic cycles modulo homological equivalence via the formalism of correspondences; more precisely, morphisms from $(X,p,n)$ to $(Y,q,m)$ are by definition the algebraic classes $\gamma\in H^{2\dim X - 2n + 2m}(X\times Y,\QQ)$ such that $\gamma\circ p = q\circ \gamma$.
	
	The category $\mathsf{Mot}$ is a pseudo-abelian tensor category.  The unit object for the tensor product is denoted by $\mathsf{Q}$; the Tate motives (resp., the Tate twists of a motive $\mathsf{m}$) will be denoted by $\mathsf{Q}(i)$ (resp., by $\mathsf{m}(i)$). Given a motive $\mathsf{m}$, we let $\langle \mathsf{m}\rangle_{\mathsf{Mot}}$ be the pseudo-abelian tensor subcategory of $\mathsf{Mot}$ generated by~$\mathsf{m}$, i.e., the smallest thick and full such subcategory containing $\mathsf{m}$ and closed under direct sums, tensor products, duals and subobjects.
	
	There is a natural contravariant functor $\h\colon \mathrm{SmProj}_\CC\to \mathsf{Mot}$, associating to a variety~$X$ its motive $\h(X)\coloneqq (X,\Delta,0)$.
	Here, $\Delta$ is the cohomology class of the diagonal in~$X\times X$.
	
	\begin{remark}
		Let $X$ be a smooth and projective variety. A polarization on $X$ gives a split inclusion of $\mathsf{Q}(-1)$ into $\h(X)$. It follows that $\langle \h(X) \rangle_{\mathsf{Mot}}$ contains all Tate motives. This category consists of the motives $ (Y, q, m)$ such that $Y$ is a power of $X$ or $\Spec(\CC)$. 
	\end{remark}
	
	The functor associating to $X$ its Hodge structure $H^{\bullet}(X,\QQ)$ factors as the composition of~$\h$ and the realization functor $R\colon \mathsf{Mot}\to \mathsf{HS}$ to the category $\mathsf{HS}$ of polarizable $\QQ$-Hodge structures. By definition, $R(X,p,n) \coloneqq p_*(H^{\bullet}(X,\QQ)(n))$, and $R$ is faithful.
	The Hodge conjecture is equivalent to the fullness of the realization functor $R$.
	
	\begin{remark}
		Let $X$ be a smooth and projective variety. Then the Hodge conjecture holds for $X$ and all of its powers if and only if the restriction of $R$ to $\langle \h(X)\rangle_{\mathsf{Mot}}$ is full.
	\end{remark} 
	
	Grothendieck's standard conjectures \cite{grothendieck1969standard} would ensure that the category $\mathsf{Mot}$ has much better properties than just being a pseudo-abelian tensor category. We recall the following theorem due to Jannsen \cite{Jan92} and Andr\'e~\cite{andre1996Motives}; see \cite[Theorem 4.1]{Ara06}.
	\begin{theorem}
		Let $X$ be a smooth and projective complex variety. Then the following are equivalent:
		\begin{itemize} 
			\item Grothendieck's standard conjectures hold for $X$;
			\item $\langle \h(X)\rangle_{\mathsf{Mot}}$ is a semisimple abelian category.
		\end{itemize}
	\end{theorem}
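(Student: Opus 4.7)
The plan is to prove the two implications separately, using as a bridge Jannsen's theorem \cite{Jan92} that the category of pure motives modulo numerical equivalence is a semisimple abelian category. The key observation is that $\langle \h(X)\rangle_{\mathsf{Mot}}$ embeds as a thick full subcategory of Jannsen's category precisely when the standard conjectures hold for $X$.

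For the implication standard conjectures $\Rightarrow$ semisimple abelian, I would first use Grothendieck's conjecture $C$ (algebraicity of the Künneth projectors) for $X$, which propagates to every power $X^n$. Combined with the Lefschetz-type conjecture $B$, a classical argument using the Hodge-Riemann relations yields conjecture $D$: homological and numerical equivalence coincide on each $X^n \times X^n$. Consequently, the natural functor from $\langle \h(X)\rangle_{\mathsf{Mot}}$ to the category $\mathsf{Mot}_{\mathrm{num}}$ of numerical motives is fully faithful. Since a thick full subcategory of a semisimple abelian category is again semisimple abelian, this direction follows from Jannsen's theorem.

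For the converse, note that the endomorphism algebras $\End_{\mathsf{Mot}}(M)$ for $M\in\langle \h(X)\rangle_{\mathsf{Mot}}$ are finite-dimensional $\QQ$-algebras, being subquotients of the algebraic cohomology of suitable powers of $X$. Abelian semisimplicity of the ambient category forces each such algebra to be semisimple. A variant of Jannsen's argument then shows that this semisimplicity is equivalent to the agreement of homological and numerical equivalence on all powers of $X$, i.e., conjecture $D$. From $D$, conjecture $C$ follows because the topological Künneth projectors are polarized Hodge classes, hence numerically non-zero and therefore algebraic; finally, $B$ is deduced from $C$ by the standard Hodge-theoretic argument of Lieberman via the Lefschetz decomposition.

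The main obstacle is the converse direction, specifically extracting a concrete algebraicity statement from abstract categorical semisimplicity. The delicate point is that the radical of $\End_{\mathsf{Mot}}(\h(X^n))$ must be identified with the ideal of correspondences that are numerically trivial modulo homological triviality, and controlling this radical relies essentially on the positivity provided by a polarization (as in Jannsen's original proof of semisimplicity of numerical motives). Once this bridge is built and conjecture $D$ is in hand on all powers of $X$, the passage to $C$ and $B$ is comparatively routine.
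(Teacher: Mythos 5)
First, a point of reference: the paper does not prove this statement at all --- it is quoted as a known theorem of Jannsen and Andr\'e, with \cite[Theorem 4.1]{Ara06} given as the source --- so your sketch can only be judged on its own merits. Your first implication (standard conjectures $\Rightarrow$ semisimple abelian) is essentially the standard argument and is fine: $B(X)$ propagates to all powers, the Hodge standard conjecture is automatic in characteristic zero, one deduces $D(X^n)$ for all $n$, and then $\langle \h(X)\rangle_{\mathsf{Mot}}$ embeds fully faithfully as a thick subcategory of Jannsen's semisimple abelian category of numerical motives. For the converse, your first step (semisimplicity forces $D$ on all powers) is also correct in substance, although your description of the mechanism is off: Jannsen's semisimplicity theorem uses \emph{no} positivity or polarization --- that is exactly what makes it unconditional; positivity enters only in the opposite implication $B\Rightarrow D$ via the Hodge standard conjecture. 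The clean argument is that a nonzero numerically trivial algebraic class $\gamma\in H^{2j}(X^n,\QQ)$ is a nonzero morphism $\mathsf{Q}(-j)\to \h(X^n)$ from a simple object, which in a semisimple abelian category admits a retraction; the retraction is an algebraic class pairing to $1$ with $\gamma$, contradicting numerical triviality. (Equivalently, $\gamma$ generates a nonzero two-sided ideal of square zero in $\End(\h(X^{2n}))$.)

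The genuine gap is in your passage from $D$ to $C$ and $B$. The claim that the K\"unneth projectors are algebraic ``because they are polarized Hodge classes, hence numerically non-zero and therefore algebraic'' is a non sequitur: numerical non-vanishing of a Hodge class does not imply its algebraicity --- that is precisely (an instance of) the Hodge conjecture, which is what one is trying to avoid assuming. Moreover, the implication $C\Rightarrow B$ that you invoke at the end is not known; the established implication runs the other way, $B(X)\Rightarrow C(X)$, and Lieberman's Hodge-theoretic arguments do not reverse it. The correct route from $D$ to the remaining conjectures is: $D(Y)$ implies the Lefschetz standard conjecture of type $A(Y)$, because $L^{d-p}\colon A^p(Y)\to A^{d-p}(Y)$ is injective by hard Lefschetz and the two spaces have equal finite dimension by nondegeneracy of the intersection pairing on numerical classes; then Kleiman's theorem gives $A(X^n\times X^n)\Rightarrow B(X^n)$, and finally $B\Rightarrow C$. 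As written, your converse direction proves only $D$, not the full standard conjectures, and the two arrows you use to bridge that gap are respectively circular and reversed.
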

	
	In light of the above theorem, we will say that the standard conjectures hold for a motive $\mathsf{m}\in\mathsf{Mot}$ if the category $\langle \mathsf{m} \rangle_{\mathsf{Mot}}$ is semisimple and abelian.
	
	\begin{remark}\label{rmk:standard1}
	The standard conjectures are known to hold for curves, surfaces, and abelian varieties \cite{kleiman}. If they hold for $X$ and $Y$, then they hold for $X\times Y$ as well.
	\end{remark} 
	We recall below some known consequences of the standard conjectures.
	
	\begin{remark}\label{rmk:standard2}
		Assume that the standard conjectures hold for $\mathsf{m}\in \mathsf{Mot}$. Then, by \cite[Corollaire 5.1.3.3]{andre}, the restriction of the realization functor $R$ to $\langle \mathsf{m}\rangle_{\mathsf{Mot}}$ is conservative, which means that a morphism $f$ in this category is an isomorphism if and only if its realization $R(f)$ is an isomorphism of Hodge structures.
		Moreover, $\mathsf{m}$ (as well as any object in $\langle \mathsf{m}\rangle_{\mathsf{Mot}}$) admits a canonical weight decomposition $\mathsf{m}=\bigoplus_i \mathsf{m}^i$ such that $R(\mathsf{m}^i)$ is a pure Hodge structure of weight $i$; see \cite[\S5.1.2]{andre}. If the standard conjectures hold for the smooth and projective variety $X$, we shall thus write $\h(X)=\bigoplus_i \h^i(X)$ for the K\"unneth decomposition of $\h(X)$.
	\end{remark}
	
	We will also use the following easy fact from the theory of motives.
	\begin{remark}
		Assume that $\Gamma$ is a finite group acting on a smooth and projective variety $X$. Then the $\Gamma$-invariant part $\h(X)^{\Gamma}\coloneqq (X, p^\Gamma, 0)$ is a direct summand of the motive of $X$, cut out by the projector $p^{\Gamma}\coloneqq \frac{1}{|\Gamma|} \sum_{\gamma\in\Gamma} [\mathrm{graph}(\gamma)] \in H^{2\dim X}(X\times X,\QQ)$. Moreover, if the quotient $X/\Gamma$ is smooth, then $\h(X)^{\Gamma}$ equals the motive $\h(X/\Gamma)$.
	\end{remark}

	\section{Some recent results}
	Let $X$ be a $\mathrm{Kum}^n$-variety, $n\geq 2$. The automorphisms of $X$ which act trivially on its second and third cohomology groups form a group $\Gamma_n\cong (\frac{\ZZ}{(n+1)\ZZ})^4$, by \cite{boissiere2011higher} and \cite[Proposition 3.1]{hassettTschinkel}. 
	In \cite[\S12.5]{markman2019monodromy}, Markman constructs a four-dimensional abelian variety $T_X$ associated to $X$, isogenous to the intermediate Jacobian~$J^3(X)$. 
	He further shows that $\Gamma_n$ acts on~$T_X$ via translations and that the quotient $M_X\coloneqq (X\times T_X)/\Gamma_n$ by the anti-diagonal action is a smooth holomorphic symplectic variety deformation equivalent to a smooth and projective moduli space of stable sheaves on an abelian surface (\cite{huybrechts2010geometry, Yos01}). Building on Markman's results and the strategy used by Charles and Markman in \cite{CM13} to prove the standard conjectures for~$\mathrm{K}3^{[n]}$-varieties, Foster obtaines the following theorem. 
	
	\begin{theorem}[{\cite[Theorem 4.1]{foster}}] \label{thm:foster}
		Let $X$ be a variety of $\mathrm{Kum}^n$-type. Then the standard conjectures hold for $M_X$. 
	\end{theorem} 
	\begin{remark}\label{rmk:remark}
		The cohomology of $M_X$ is naturally identified with the $\Gamma_n$-invariants in $H^{\bullet}(X\times T_X,\QQ)$. Since $\Gamma_n$ acts trivially on the cohomology of $T_X$, we have 
		\[
		H^{\bullet}(M_X,\QQ) = 	H^{\bullet}(X,\QQ)^{\Gamma_n} \otimes H^{\bullet}(T_X,\QQ).
		\]
		As for any smooth projective complex variety, we can split off from $\h(T_X)$ a direct summand $\h^0(T_X)=\mathsf{Q}$ in $\mathsf{Mot}$, cut out by the cohomology class of the algebraic cycle $T_X \times \{t\}$ on $T_X \times T_X$, for any point $t\in T_X$. We conclude that 
		$$\h(M_X) = (\h(X) \otimes \h(T_X))^{\Gamma_n} $$ 
		contains the motive
		$\h(X)^{\Gamma_n}= (\h(X)\otimes \h^0(T_X))^{\Gamma_n}$ as a direct summand. Hence, by Foster's Theorem \ref{thm:foster}, the standard conjectures hold for~$\h(X)^{\Gamma_n}$.
		In particular, the motive $\h(X)^{\Gamma_n}$ admits a weight decomposition; recalling that $\Gamma_n$ acts trivially on the second cohomology of $X$, the component $\h^2(X)$ of $\h(X)$ is a well-defined direct summand, contained in $\h(X)^{\Gamma_n}$.
	\end{remark}
	
	The following result of the second author \cite{varesco2023hodge} is crucial for our proof of Theorem \ref{thm:main}. It allows us to relate varieties of generalized Kummer type of different dimensions via algebraic correspondences. Let~$r\neq 0$ be a rational number. 
	A similitude of multiplier~$r$ between two quadratic spaces $V_1$ and~$V_2$ is a linear isomorphism $t\colon V_1\to V_2$ which multiplies the form by a factor $r$, i.~e., such that~$(t(u),t(w))_2=r \cdot (u,w)_1$ for all $u,w\in V_1$.
	
	Recall (see \cite[\S1.9]{Huy99}) that the second cohomology of any hyper-K\"ahler variety $X$ is equipped with a non-degenerate symmetric bilinear form, called the Beauville-Bogomolov form, so that $H^2(X,\QQ)$ is naturally regarded as a quadratic space. We denote by $H^2_{\mathrm{tr}}(X,\QQ)$ its transcendental part, that is, the orthogonal complement to the N\'eron-Severi group $\NS(X) \otimes_{\ZZ} \QQ \subset H^2 (X,\QQ)$ with respect to the Beauville-Bogomolov form.
	
	\begin{theorem}[{\cite[Theorem 0.4]{varesco2023hodge}}] \label{thm:varesco}
		Let $X_1, X_2$ be varieties of generalized Kummer type (not necessarily of the same dimension). Assume that 
		$$\phi\colon H^2(X_1,\QQ) \xrightarrow{\ \sim \ } H^2(X_2,\QQ)$$
		is a rational Hodge similitude. 
		Then $\phi$ is induced by an algebraic cycle on $X_1\times X_2$.
	\end{theorem}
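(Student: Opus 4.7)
The plan is to reduce the algebraicity of $\phi$ to that of a morphism of abelian varieties, exploiting the fact that the Kuga--Satake correspondence is algebraic for $\mathrm{Kum}^n$-type varieties, as recalled in the introduction. First, write each $H^2(X_i, \QQ)$ as the orthogonal direct sum of its algebraic part $\NS(X_i)_{\QQ}$ and its transcendental part $T(X_i)$. Since $\phi$ is a morphism of Hodge structures that scales the Beauville--Bogomolov form by a nonzero rational factor, it preserves this orthogonal splitting. Its restriction to the Néron--Severi parts is a rational linear combination of external products of divisor classes, hence tautologically algebraic. The problem then reduces to showing that the induced Hodge similitude $\bar{\phi}\colon T(X_1) \xrightarrow{\sim} T(X_2)$ between the K3-type transcendental parts is induced by an algebraic cycle on $X_1\times X_2$.

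For each $X_i$, the results of Markman and the observation of Voisin provide an algebraic correspondence $\Gamma_i$ on $X_i \times A_i$, where $A_i$ is an abelian variety isogenous to a power of the Weil-type fourfold $T_{X_i}$, realizing the Kuga--Satake embedding of $T(X_i)$ into $\End(H^1(A_i, \QQ))$. Equivalently, the transcendental summand of $\h^2(X_i)$ is, via algebraic projectors, a motivic direct summand of a Tate twist of $\h(A_i \times A_i)$. The next step is to lift $\bar{\phi}$ to an isogeny $\psi\colon A_1 \to A_2$ (after possibly replacing each $A_i$ by an isogenous variety in a controlled way). For this I would use the classical fact that the Kuga--Satake construction sends a similitude of K3-type quadratic Hodge structures to an isogeny of Kuga--Satake varieties, via the induced isomorphism of even Clifford algebras, with the similitude factor absorbed by a rescaling. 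Crucially, for any $\mathrm{Kum}^n$-variety the Kuga--Satake variety is governed by the four-dimensional $T_{X_i}$, so the $A_i$ have matching ranks even when $\dim X_1\neq \dim X_2$, and a lift is available.

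Finally, $\psi$ is a morphism of abelian varieties and therefore has algebraic graph class on $A_1 \times A_2$. Composing this class with $\Gamma_1$ and the transpose of $\Gamma_2$ produces an algebraic cycle on $X_1 \times X_2$ realizing $\bar{\phi}$, which together with the Néron--Severi contribution gives $\phi$. I expect the main technical obstacle to lie in the lifting step: one has to verify that Markman's explicit cycles intertwine the action of $\bar{\phi}$ with a genuine isogeny $\psi$ of abelian varieties, matching the abstract Kuga--Satake functoriality against the concrete geometric description of the fourfold $T_{X_i}$ inside the Kuga--Satake variety. Carrying out this compatibility carefully, across possibly different values of $n$, is the heart of the argument.
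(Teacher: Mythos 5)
First, note that the paper does not prove this statement: it is imported wholesale from \cite[Theorem 0.4]{varesco}, and all the paper offers is a two-sentence indication of the strategy, namely that the similitude $\phi$ induces an isogeny of Kuga--Satake varieties and that the claim then follows from the algebraicity of the Kuga--Satake correspondence \emph{together with Foster's Theorem \ref{thm:foster}}. Your outline matches that strategy in its main lines: splitting off the N\'eron--Severi part, lifting the similitude on transcendental lattices to an isogeny of Kuga--Satake varieties via the induced isomorphism of even Clifford algebras, and transporting the graph of that isogeny back through the Markman--Voisin correspondences. You also correctly identify the compatibility between the abstract Clifford-algebra functoriality and Markman's geometric model $T_{X_i}$ as a genuine issue.

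There is, however, a gap at the step you dispose of with the phrase ``via algebraic projectors.'' Having an algebraic correspondence $\Gamma_i$ that realizes the Kuga--Satake \emph{embedding} $H^2_{\mathrm{tr}}(X_i)(1)\hookrightarrow \End(H^1(A_i,\QQ))$ is strictly weaker than knowing that $\h^2_{\mathrm{tr}}(X_i)$ is a motivic direct summand of $\h(A_i\times A_i)(1)$: the latter requires an \emph{algebraic retraction}, and this is exactly what does not come for free. Concretely, in your last step the composition $\Gamma_2^{t}\circ[\mathrm{graph}(\psi\times\psi)]\circ\Gamma_1$ lands in $H^{\bullet}(X_2,\QQ)$ via the transpose of $\Gamma_2$, and to conclude that its degree-two component equals $\bar{\phi}$ (rather than some other algebraic endomorphism whose inverse you cannot control) you must be able to algebraically project onto $H^2(X_2,\QQ)$ and to invert the algebraic self-map $\Gamma_2^{t}\circ\Gamma_2$ of $H^2_{\mathrm{tr}}(X_2)$. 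Both points are supplied by Foster's Theorem \ref{thm:foster} (algebraicity of the K\"unneth projector onto $H^2$ and the standard conjectures for $M_{X_i}$, hence semisimplicity and conservativity for the relevant categories of motives as in Remark \ref{rmk:standard}); this is precisely why the paper names Foster's theorem as an ingredient, and it is absent from your argument. A secondary, smaller point: the restriction of $\phi$ to $\NS(X_1)_{\QQ}$ is not literally a combination of products of divisor classes, since a correspondence inducing it must live in $H^{2\dim X_1-2}(X_1,\QQ)\otimes H^2(X_2,\QQ)$; one needs algebraic classes in degree $2\dim X_1-2$ dual to divisors, which can be produced from monomials in divisor classes using the Hodge--Riemann relations, so this is repairable but not tautological as stated.
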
 
	\begin{proof} 
		As $\phi$ is a morphism of Hodge structures, it is the sum of Hodge similitudes $\phi_{\mathrm{tr}}\colon H^2_{\mathrm{tr}}(X_1,\QQ) \xrightarrow{ \ \sim \ } H^2_{\mathrm{tr}}(X_2,\QQ)$ and $\phi_{\mathrm{alg}}\colon \NS(X_1)_{\QQ} \xrightarrow{ \ \sim \ } \NS(X_2)_{\QQ}$.
		Now \cite[Theorem~0.4]{varesco2023hodge} implies that $\phi_{\mathrm{tr}}\colon H^2_{\mathrm{tr}}(X_1,\QQ)\xrightarrow{\ \sim \ } H^2_{\mathrm{tr}}(X_2,\QQ)$ is induced by an algebraic cycle. Since $\NS(X_1)_{\QQ}$ and $\NS(X_2)_{\QQ}$ consist of algebraic classes, $\phi_{\mathrm{alg}}$ is necessarily induced by an algebraic cycle, and Theorem \ref{thm:varesco} follows. 
	\end{proof} 
	
	The main observation behind Theorem \ref{thm:varesco} is that the similitude $\phi$ induces an isogeny between the Kuga-Satake varieties of $X_1$ and $X_2$. The statement is then deduced using the algebraicity of the Kuga-Satake correspondence and Foster's Theorem \ref{thm:foster}.

	\section{Conclusion}
	
	As mentioned in the introduction, we will use the construction of a K3 surface $S_K$ associated to any variety $K$ of $\mathrm{Kum}^3$-type, given by the first author in \cite[Theorem 1.2]{floccariKum3}. 
	We will need the following result which is obtained from the construction.

	\begin{theorem}[\cite{floccariKum3}] \label{thm:floccari}
		Let $K$ be a variety of $\mathrm{Kum}^3$-type, with associated K3 surface $S_K$. There exists an algebraic cycle on $S_K\times K$ which induces a Hodge similitude of multiplier~$2$
		\[
		\psi\colon H^2_{\mathrm{tr}}(S_K,\QQ) \xrightarrow{\ \sim \ } H^2_{\mathrm{tr}}(K,\QQ).
		\]
		Moreover, the Hodge conjecture holds for any power of $S_K$.
	\end{theorem}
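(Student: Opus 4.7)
The plan is to construct $S_K$ via a period-theoretic procedure starting from Markman's associated Weil-type abelian fourfold $T_K$, and then to deduce both the algebraic similitude $\psi$ and the Hodge conjecture for powers of $S_K$ from the fact that $S_K$ and $T_K$ lie motivically very close. Concretely, for $K$ of $\mathrm{Kum}^3$-type, the works of O'Grady and Markman imply that $T_K$ is a four-dimensional abelian variety of Weil type over an imaginary quadratic field $E$ with trivial discriminant, so that its Hodge group is contained in $\mathrm{SU}(2,2)$. Via the exceptional isomorphism $\Spin(2,4)\simeq\mathrm{SU}(2,2)$, a polarizable K3-type rational Hodge structure $V$ of rank $6$ can be extracted from a piece of $\wedge^2_E H^1(T_K,\QQ)$, and surjectivity of the K3 period map then produces a projective K3 surface $S_K$ together with a Hodge similitude $H^2_{\mathrm{tr}}(S_K,\QQ)\simeq V$ of multiplier~$2$, the factor~$2$ reflecting the discrepancy between the cup-product pairing on a K3 and the polarization naturally induced on $V$ from $T_K$.

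To establish algebraicity of $\psi$, I would combine Markman's algebraic cycle on $K\times T_K$ realizing $H^3(K,\QQ)\simeq H^1(T_K,\QQ)(-1)$, which algebraically identifies $V$ with a sub-Hodge structure of the cohomology of a suitable power of $K$, with the algebraicity of the Kuga-Satake correspondence for the K3 surface $S_K$ (known by \cite{voisinfootnotes}); the latter is strengthened by the fact that, through the same $\Spin(2,4)\simeq\mathrm{SU}(2,2)$ identification, the Kuga-Satake variety of $S_K$ is isogenous to a power of $T_K$. Composing these algebraic correspondences yields the required algebraic cycle on $S_K\times K$ inducing $\psi$ with multiplier~$2$.

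For the Hodge conjecture on arbitrary powers of $S_K$, one reduces through the algebraic Kuga-Satake correspondence to the Hodge conjecture for powers of $T_K$. The specific numerical type of $T_K$ forced by the $\mathrm{Kum}^3$-origin (trivial Weil discriminant, $\mathrm{SU}(2,2)$ Hodge group) constrains its cohomology ring sufficiently: beyond the divisor classes (algebraic by Lefschetz), the only Hodge classes on powers of $T_K$ are of Weil type, and all such Weil classes factor through $V$ and hence are algebraic by the correspondence $\psi$ constructed in the previous step. The main obstacle is exactly this last point: proving that no further exotic Hodge classes appear in powers of $T_K$ beyond the Weil classes captured by $V$ requires exploiting the particular structure of $T_K$ forced by the $\mathrm{Kum}^3$-hypothesis rather than generic Weil-fourfold arguments, and this is where the construction of $S_K$ is essential, since it transforms the Hodge conjecture for powers of a Weil fourfold (open in general) into a statement verifiable via the concrete K3 cohomology of $S_K$.
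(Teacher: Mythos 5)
The paper does not actually prove this statement: it is imported from \cite{floccariKum3}, with a pointer to the proofs of Theorem~5.7 and Corollary~5.8 there, so your proposal has to be measured against that external construction. The essential difference --- and the genuine gap in your argument --- lies in how $S_K$ is produced and why anything relating it to $K$ is \emph{algebraic}. You build $S_K$ purely period-theoretically: extract a rank-$6$ K3-type Hodge structure $V$ from $\wedge^2_E H^1(T_K,\QQ)$ via $\Spin(2,4)\simeq \mathrm{SU}(2,2)$ and invoke surjectivity of the period map. That yields a K3 surface with the correct transcendental Hodge structure, but with no algebraic cycle linking it to $K$ or to $T_K$. To supply one, you appeal to ``the algebraicity of the Kuga--Satake correspondence for the K3 surface $S_K$ (known by \cite{voisinfootnotes})''. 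This is a misattribution: Voisin's result, built on Markman's cycle, gives the algebraicity of the Kuga--Satake correspondence for the \emph{generalized Kummer type variety}, not for any K3 surface. For a K3 surface with transcendental lattice of rank $6$ the algebraicity of its Kuga--Satake correspondence is open in general; for $S_K$ it is a \emph{consequence} of the theorem you are trying to prove (once the algebraic cycle on $S_K\times K$ is in hand, one composes it with the Markman--Voisin correspondence on the $K$-side), not an available input. Knowing abstractly that the Kuga--Satake variety of $S_K$ is isogenous to a power of $T_K$ does not produce the required cycle on $S_K\times T_K^2$. So your second step is circular, and the third step inherits the problem: you reduce powers of $S_K$ to powers of $T_K$ and then say powers of $T_K$ become tractable ``via the concrete K3 cohomology of $S_K$'', which reverses the actual order of the argument.

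What \cite{floccariKum3} does to close this gap is geometric rather than period-theoretic: the quotient of $K$ by a group of symplectic involutions acting trivially on its cohomology admits a crepant resolution which is a hyper-K\"ahler variety of $\mathrm{K3}^{[3]}$-type, birational (in the projective case) to a moduli space of sheaves on a uniquely determined K3 surface $S_K$; the quotient map together with the known correspondences for $\mathrm{K3}^{[n]}$-type moduli spaces furnishes the algebraic cycle on $S_K\times K$ inducing $\psi$, with the multiplier $2$ coming out of the lattice computation. Only then is the Hodge conjecture for all powers of $S_K$ deduced, via the second author's criterion in \cite{varesco} for K3 surfaces with rank-$6$ transcendental lattice whose Kuga--Satake correspondence is algebraic --- that hypothesis now being verified by composing $\psi$ with the algebraic Kuga--Satake correspondence for $K$. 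Your outline correctly identifies all the ingredients (Markman's cycle, the $\Spin(2,4)\simeq\mathrm{SU}(2,2)$ coincidence, the reduction of exotic Hodge classes to Weil classes), but without a geometric origin for $S_K$ the chain of algebraic correspondences never gets started.
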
 
	\begin{proof}
		We review the proof of this result. Let $K$ be any manifold of $\mathrm{Kum}^3$-type. Then, following \cite[\S2]{floccariKum3}, there exists an action of $G\coloneqq (\ZZ/2\ZZ)^5$ on $K$ such that the quotient $K/G$ admits a crepant resolution $Y_K\to K/G$, with $Y_K$ a hyper-K\"ahler manifold of $\mathrm{K}3^{[3]}$-type (this is \cite[Theorem 1.1]{floccariKum3}). 
		Let us assume from now on that $K$ is a projective variety of $\mathrm{Kum}^3$-type. Then, by \cite[Theorem~1.2]{floccariKum3}, the variety $Y_K$ is birational to a moduli space $M_{S_K, H}(v)$ of $H$-stable sheaves on a uniquely determined K3 surface $S_K$, for a polarization $H$ and a Mukai vector $v$ on $S_K$. This is the K3 surface $S_K$ associated to $K$ appearing in the above statement. 
		
		Now, by construction, we have a rational map $r\colon K\dashrightarrow Y_K$. By \cite[Lemma~4.8]{floccariKum3}, setting $\phi\coloneqq \frac{1}{16}r_*$ we obtain a Hodge similitude of multiplier $2$
		\[
		\phi\colon H^2_{\mathrm{tr}} (Y_K,\QQ)\xrightarrow{\ \sim \ } H^2_{\mathrm{tr}}(K,\QQ)
		\]
		between the transcendental Hodge structures. This map is induced by a multiple of the closure of the graph of the rational map $r$, and, hence, by an algebraic cycle on $Y_K\times K$.
		Moreover, by \cite[Lemma 2.6]{Huy99}, a birational map $M_{S_K, H}(v) \dashrightarrow Y_K$ yields a Hodge isometry $\phi' \colon H^2(M_{S_K, H}(v), \QQ) \xrightarrow{ \ \sim \ } H^2(Y_K,\QQ)$, induced by the closure of the graph and therefore algebraic. Finally, using the Mukai homomorphism (\cite{mukai1987moduli, O'G97}), one obtains that there exists a Hodge isometry 
		\[ \phi''\colon H^2_{\mathrm{tr}}(S_K,\QQ)\xrightarrow{ \ \sim \ } H^2_{\mathrm{tr}}(M_{S_K, H}(v),\QQ) \]
		induced by an algebraic cycle on $S_K \times M_{S_K, H}(v)$; see \cite[pp. 407]{floccariKum3} for more details. 
		
		We conclude that the composition $\psi\coloneqq \phi\circ \phi' \circ \phi''$ is a Hodge similitude of multiplier $2$
		\[
		\psi\colon H^2_{\mathrm{tr}}(S_K,\QQ)\xrightarrow{ \ \sim \ } H^2_{\mathrm{tr}}(K,\QQ),
		\]
		induced by an algebraic cycle on $S_K\times K$. This proves the first statement. The second statement is \cite[Corollary 5.8]{floccariKum3}. 		   
	\end{proof}
	
	Let $X$ be a $\mathrm{Kum}^n$-variety, $n\geq 2$, and denote by $A_2^{\bullet}(X)$ the subalgebra of the rational cohomology generated by $H^2(X,\QQ)$. 
	By Foster's Theorem \ref{thm:foster}, the degree~$2$ component~$\h^2(X)$ of $\h(X)$ is well-defined and it is  a direct summand of the~$\Gamma_n$-invariant part~$\h(X)^{\Gamma_n}$ of $\h(X)$, see Remark \ref{rmk:remark}. 
	
	\begin{lemma}\label{lem:uno}
		The subalgebra $A_2^{\bullet}(X)\subset H^{\bullet}(X,\QQ)$ is the realization of a submotive~$\mathsf{a}_2(X)$ of $\h(X)$.
		Moreover, $\mathsf{a}_2(X)\in \langle \h^2(X)\rangle_{\mathsf{Mot}}$.
	\end{lemma}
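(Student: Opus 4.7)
The plan is to realise $A_2^\bullet(X)$ as the image of an iterated cup-product morphism at the level of motives, and to read off both assertions from the semisimplicity provided by Foster's theorem.

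First, I would combine Foster's Theorem~\ref{thm:foster} with Remark~\ref{rmk:remark}: the standard conjectures hold for $\h(X)^{\Gamma_n}$, so the subcategory $\langle \h(X)^{\Gamma_n}\rangle_{\mathsf{Mot}}$ is semisimple abelian and admits the canonical weight decomposition $\h(X)^{\Gamma_n}=\bigoplus_i \h^i(X)^{\Gamma_n}$. As already observed in the paragraph preceding the lemma, $\h^2(X)=\h^2(X)^{\Gamma_n}$ is a direct summand of $\h(X)^{\Gamma_n}$, and therefore $\langle \h^2(X)\rangle_{\mathsf{Mot}}\subset \langle \h(X)^{\Gamma_n}\rangle_{\mathsf{Mot}}$ is itself semisimple abelian.

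Next, I would build the motivic cup-product. The class of the small diagonal $\delta_{j+1}\subset X^{j+1}$ is an algebraic correspondence from $X^j$ to $X$ whose realization is the $j$-fold cup-product $H^\bullet(X,\QQ)^{\otimes j}\to H^\bullet(X,\QQ)$. Pre- and post-composing with the Künneth projectors, which are algebraic by Theorem~\ref{thm:foster}, I obtain a morphism
\[
\mu_j\colon \h^2(X)^{\otimes j}\longrightarrow \h^{2j}(X)
\]
in $\langle \h(X)^{\Gamma_n}\rangle_{\mathsf{Mot}}$ whose realization is the restriction of $\cup^j$ to $H^2(X,\QQ)^{\otimes j}$. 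By semisimplicity, $\mu_j$ has an image $\mathsf{a}_2^{2j}(X)$, which is a direct summand of $\h^{2j}(X)$, and hence of $\h(X)$. Conservativity of the realization functor on this category (Remark~\ref{rmk:standard}) identifies the realization of $\mathsf{a}_2^{2j}(X)$ with the subspace $A_2^{2j}(X)\subset H^{2j}(X,\QQ)$, so that $\mathsf{a}_2(X)\coloneqq \bigoplus_{j\geq 0} \mathsf{a}_2^{2j}(X)$ is a submotive of $\h(X)$ realizing the full subalgebra $A_2^\bullet(X)$.

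For the final assertion I would invoke the fact that in a semisimple abelian category the image of a morphism is isomorphic to a direct summand of its source: thus each $\mathsf{a}_2^{2j}(X)$ is isomorphic, as an object of $\mathsf{Mot}$, to a subobject of $\h^2(X)^{\otimes j}$. Since $\langle \h^2(X)\rangle_{\mathsf{Mot}}$ is thick and closed under tensor products, subobjects and direct sums, it contains each $\mathsf{a}_2^{2j}(X)$, and therefore $\mathsf{a}_2(X)$. I do not foresee any serious obstacle in this argument: everything is a formal consequence of Foster's standard conjectures for $\h(X)^{\Gamma_n}$, and the only point that needs care is the identification of the abstract image of $\mu_j$ with the concrete subspace $A_2^{2j}(X)$, which is immediate from the faithfulness of $R$ on the summand $\h^{2j}(X)\subset \h(X)$.
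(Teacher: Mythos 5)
Your proposal is correct and follows essentially the same route as the paper: the small diagonal gives the cup-product as a morphism of motives, restriction to $\h^2(X)^{\otimes j}$ uses the algebraic K\"unneth projectors, and semisimplicity of $\langle \h(X)^{\Gamma_n}\rangle_{\mathsf{Mot}}$ (Foster) yields the image as a submotive that is simultaneously a direct summand of the source, hence lies in $\langle \h^2(X)\rangle_{\mathsf{Mot}}$. The only cosmetic difference is that the paper packages the maps $\mu_j$ into a single morphism $\beta\colon \bigoplus_i \h^2(X)^{\otimes i}\to \h(X)^{\Gamma_n}$, and the identification of the realization of the image with $A_2^{\bullet}(X)$ rests on additivity of $R$ applied to the split image factorization rather than on conservativity, but this does not affect the argument.
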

	\begin{proof}
		For $i>0$, we let $\delta_{i+1} \in H^{2i\dim X } (X^{i+1}, \QQ)$ denote the class of the small diagonal $\{(x, \dots, x)\} \subset X^{i+1}$. This class induces the cup-product morphism $\h(X)^{\otimes i} \to \h(X)$, which restricts to a morphism $\left(\h(X)^{\Gamma_n}\right)^{\otimes i} \to \h(X)^{\Gamma_n}$ in $\mathsf{Mot}$.
		Since $\h^2(X)$ is a direct summand of~$\h(X)^{\Gamma_n}$, the cup-product induces a morphism of motives
		\[
		\beta\colon \bigoplus_i \h^2(X)^{\otimes i} \to \h(X)^{\Gamma_n}.
		\]
		Note that $\beta$ is a morphism in the category $\langle \h(X)^{\Gamma_n}\rangle_{\mathsf{Mot}}$, which is abelian and semisimple by Foster's Theorem \ref{thm:foster} and Remark \ref{rmk:remark}. Therefore, the image of $\beta$ is a submotive $\mathsf{a}_2(X) \subset \h(X)$, whose realization is $A_2^{\bullet}(X) \subset H^{\bullet}(X,\QQ)$. By semisimplicity, $\mathsf{a}_2(X)$ is a direct summand of~$\bigoplus_i \h^2(X)^{\otimes i}$, and hence $\mathsf{a}_2(X)\in \langle \h^2(X)\rangle_{\mathsf{Mot}}$.
	\end{proof}
	
	The next lemma shows that $X$ is Hodge similar to a variety of $\mathrm{Kum}^3$-type.
	
	\begin{lemma}\label{lem:due}
		For any variety $X$ of $\mathrm{Kum}^n$-type, there exists a  variety $K$ of $\mathrm{Kum}^3$-type and a Hodge similitude of multiplier $n+1$
		\[
		\phi \colon H^2(K,\QQ) \xrightarrow{\ \sim \ } H^2(X,\QQ)
		\]
		with respect to the Beauville-Bogomolov pairings.
	\end{lemma}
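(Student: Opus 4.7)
My plan is to combine an elementary computation of rational quadratic forms with the surjectivity of the period map for hyper-K\"ahler varieties of $\mathrm{Kum}^3$-type.

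For the first step, I would invoke Beauville's description of the Beauville-Bogomolov form: for any projective variety $Y$ of $\mathrm{Kum}^m$-type, the quadratic space $H^2(Y,\QQ)$ is $\QQ$-isometric to $U^{\oplus 3}\oplus\langle -2(m+1)\rangle$, where $U$ denotes the hyperbolic plane. The key lattice-theoretic observation is then the $\QQ$-isometry
\[
(n+1)\cdot\bigl(U^{\oplus 3}\oplus\langle -8\rangle\bigr)\;\cong\;U^{\oplus 3}\oplus\langle -2(n+1)\rangle,
\]
which reduces to two elementary facts: over $\QQ$ any nonzero rescaling of the hyperbolic plane remains hyperbolic, so $U(n+1)\cong U$; and $\langle -8(n+1)\rangle\cong\langle -2(n+1)\rangle$, because their ratio $4$ is a square. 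Equivalently, for any auxiliary projective variety $K_0$ of $\mathrm{Kum}^3$-type, there exists a $\QQ$-linear similitude $\iota\colon H^2(K_0,\QQ)\to H^2(X,\QQ)$ of multiplier $n+1$ for the Beauville-Bogomolov forms.

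For the second step, I would transport the Hodge structure along $\iota$. Pulling back equips the $\QQ$-vector space $H^2(K_0,\QQ)$ with a new weight-two Hodge structure, still of K3-type since $\iota_{\CC}^{-1}$ sends the isotropic line $H^{2,0}(X)$ to an isotropic line. Since $X$ is projective, the $\iota$-preimage of an ample class is a rational $(1,1)$-class of strictly positive Beauville-Bogomolov square, so the transported Hodge structure satisfies the standard projectivity criterion for hyper-K\"ahler manifolds. By surjectivity of the period map for $\mathrm{Kum}^3$-type varieties, one then obtains a projective variety $K$ of $\mathrm{Kum}^3$-type whose Hodge structure on $H^2(K,\QQ)$ is precisely this transported Hodge structure; the similitude $\iota$ is then the desired $\phi$.

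The least routine point is the appeal to surjectivity of the period map: one must know that every $\QQ$-Hodge structure of K3-type on the rational Beauville-Bogomolov lattice of $\mathrm{Kum}^3$-type admitting a rational $(1,1)$-class of positive square is realized by a \emph{projective} variety of $\mathrm{Kum}^3$-type, not merely by a non-algebraic hyper-K\"ahler manifold of that deformation type. This is a standard consequence of Huybrechts' general surjectivity theorem for periods of hyper-K\"ahler manifolds, combined with the hyper-K\"ahler projectivity criterion, and I expect it to be the main (though minor) technical point to cite carefully.
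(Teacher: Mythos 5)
Your proposal is correct and follows essentially the same route as the paper: both construct an explicit rational similitude of multiplier $n+1$ between $U^{\oplus 3}\oplus\langle -8\rangle$ and $U^{\oplus 3}\oplus\langle -2n-2\rangle$ (the paper writes it down on a basis, you phrase it via $U(n+1)\cong U$ over $\QQ$ and the square ratio $4$), then transport the period of $X$ through it and invoke surjectivity of the period map together with Huybrechts' projectivity criterion to produce the projective $\mathrm{Kum}^3$-variety $K$. The only point worth making explicit is that the positivity condition $(\theta,\bar\theta)>0$ on the transported period is preserved because the multiplier $n+1$ is positive; otherwise nothing is missing.
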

	\begin{proof} 		
		By \cite{beauville1983varietes}, the integral second cohomology group of a $\mathrm{Kum}^n$-variety is identified with the lattice $\Lambda_{\mathrm{Kum}^n} = \mathrm{U}^{\oplus 3} \oplus \langle -2n-2\rangle$, where $\mathrm{U}$ is a hyperbolic plane. It is easy to define a rational similitude
		\[ 
		\phi_n \colon \Lambda_{\mathrm{Kum}^3}\otimes \QQ \xrightarrow{\ \sim \ } \Lambda_{\mathrm{Kum}^n}\otimes \QQ
		\]
		of multiplier $n+1$. Explicitly, let $e_i^n,f_i^n$, $i=1,2,3$ and $\xi^n$ be a basis of $\Lambda_{\mathrm{Kum}^n}$, where: $e^n_i,f^n_i$ are isotropic and $(e^n_i,f^n_i)=1$, the planes $\langle e^n_i,f^n_i\rangle$ and $\langle e^n_j, f^n_j\rangle$ are orthogonal for $i\neq j$, $\xi^n$ has square $-2n-2$ and it is orthogonal to each $e^n_i$ and $f^n_i$.
		Then $\phi_n$ is defined via $$e^3_i\mapsto e_i^n, \ \ \ f_i^3\mapsto (n+1) f_i^n, \ \text{ for } i=1,2,3, \ \ \ \xi^3\mapsto 2 \xi^n.$$ 
		
		Let $X$ be a $\mathrm{Kum}^n$-variety, and let $\eta\colon H^2(X,\ZZ)\xrightarrow{\ \sim  \ } \Lambda_{\mathrm{Kum}^n}$ be an isometry. 
		The Hodge structure on the left hand side is determined by its period $[\sigma]=\eta(H^{2,0}(X))$, which is an isotropic line in $\Lambda_{\mathrm{Kum}^n}\otimes \CC$ such that $(\sigma,\bar{\sigma})>0$. Via the similitude $\phi_n^{-1}$, we obtain the isotropic line $[\theta]\coloneqq [\phi_n^{-1}(\sigma)]$ in~$ \Lambda_{\mathrm{Kum}^3}\otimes \CC$, such that $(\theta, \bar{\theta})>0$. 
		By the surjectivity of the period map \cite[Theorem 8.1]{Huy99}, there exists a manifold $K$ of $\mathrm{Kum}^3$-type with period~$[\theta]$, which means that there is an isometry $\eta'\colon H^2(K,\ZZ)\xrightarrow{\ \sim \ } \Lambda_{\mathrm{Kum}^3}$ mapping~$H^{2,0}(K)$ to~$[\theta]$. 
		By construction, the composition $\eta^{-1}\circ \phi_n\circ \eta'$ gives an isomorphism of rational Hodge structures
		\[
		\phi \colon H^2(K,\QQ) \xrightarrow{\ \sim \ } H^2(X,\QQ),
		\]
		and hence $\phi$ is a rational Hodge similitude.	Since $X$ is projective, $K$ is projective as well, thanks to Huybrechts' projectivity criterion \cite[Theorem 3.11]{Huy99}. 
	\end{proof} 
	
	We can now complete the proofs of our main results.
	
	\begin{proof}[Proof of Theorem \ref{thm:main}]
		Given the projective $\mathrm{Kum}^n$-variety $X$, we consider the $\mathrm{Kum}^3$-variety $K$ given by Lemma \ref{lem:due}. The same lemma gives a Hodge similitude $\phi\colon H^2(K,\QQ)\xrightarrow{\ \sim \ } H^2(X,\QQ)$, which is induced by an algebraic cycle on $K\times X$ by Theorem \ref{thm:varesco}.
		Applying Theorem \ref{thm:floccari} to $K$, we obtain the associated K3 surface~$S_K$ and a Hodge similitude $\psi\colon H^2_{\mathrm{tr}}(S_K,\QQ) \xrightarrow{ \ \sim \ } H^2_{\mathrm{tr}}(K,\QQ)$ induced by an algebraic cycle on $S_K\times K$. 		
		The composition of $\psi$ with $\phi$ thus gives a rational Hodge similitude of multiplier $2n+2$
		\begin{equation*}\label{eq:HodgeIsometry}
			\Psi \colon H_{\mathrm{tr}}^2(S_K,\QQ) \xrightarrow{\ \sim \ } H_{\mathrm{tr}}^2(X,\QQ)
		\end{equation*}
		of transcendental lattices, which is induced by an algebraic cycle on $S_K\times X$.
	
		Consider the submotive $\h^2(X)$ of $\h(X)$. By Lefschetz (1,1) theorem, we have the decomposition $\h^2(X) = \h^2_{\mathrm{tr}}(X) \oplus \h^2_{\mathrm{alg}}(X)$ into transcendental and algebraic part (and similarly for $S_K$); the algebraic part is a sum of Tate motives $\mathsf{Q}(-1)$.
		Since $\Psi$ is induced by an algebraic cycle on $S_K \times K$, it is the realization of a morphism
		\[
		\widetilde{\Psi}\colon \h_{\mathrm{tr}}^2(S_K) \xrightarrow{\ \ \ } \h_{\mathrm{tr}}^2(X)
		\]
		of motives. 
		Recall from Remark \ref{rmk:remark} that $\h^2(X)$ is a direct summand of $\h(X)^{\Gamma_n}$, and in particular the motive $\h^2(X)$ belongs to $\langle \h(M_X)\rangle_{\mathsf{Mot}}$. Therefore, $\widetilde{\Psi}$ is a morphism in the subcategory~$\langle \h(S_K\times M_X)\rangle_{\mathsf{Mot}}$ of~$\mathsf{Mot}$. The standard conjectures hold for $S_K\times M_X$ by Theorem \ref{thm:foster} and Remark \ref{rmk:standard1}. By conservativity (see Remark~\ref{rmk:standard2}), it follows that $\widetilde{\Psi}$ is an isomorphism of motives, since its realization $\Psi$ is an isomorphism of Hodge structures. 
		As $\h^2(X)$ is the sum of $\h^2_{\mathrm{tr}}(X)$ and Tate motives, we conclude that $\h^2(X)\in \langle \h(S_K)\rangle_{\mathsf{Mot}}$. 
		
		Consider now the submotive $\mathsf{a}_2(X)\subset \h(X)$ constructed in Lemma \ref{lem:uno}. By the above,~$\mathsf{a}_2(X)$ belongs to~$\langle \h(S_K)\rangle_{\mathsf{Mot}}$.
		Since, by Theorem \ref{thm:floccari}, the Hodge conjecture holds for all powers of $S_K$, the realization functor $R$ is full when restricted to $\langle \h(S_K) \rangle_{\mathsf{Mot}}$, and we deduce that any Hodge class in $A_2^{\bullet}(X)\subset H^{\bullet}(X,\QQ)$ is algebraic.
	\end{proof}
	
	\begin{remark}
		Let $X$ be a $\mathrm{Kum}^n$-variety as above and consider any power $Z=X^r$. Denote by $A_2^{\bullet}(Z)$ the subalgebra of $H^{\bullet}(Z,\QQ)$ generated by $H^2(Z,\QQ)$. Then our argument implies that all Hodge classes in $A_2^{\bullet}(Z)$ are algebraic.             
		In fact, note that $A_2^{\bullet}(Z)$ is the graded tensor product $A_2^{\bullet}(X)^{\otimes r}$, because $H^1(X,\QQ)$ is zero. With notation as in the above proof, the argument given implies that $A_2^{\bullet}(Z)$ is the realization of a submotive $\mathsf{a}_2(Z)$ of~$\h(Z)$, and moreover that $\mathsf{a}_2(Z)=\mathsf{a}_2(X)^{\otimes r}$ belongs to $\langle \h(S_K) \rangle_{\mathsf{Mot}}$. As the Hodge conjecture holds for all powers of $S_K$, it follows that any Hodge class in $A_2^{\bullet}(Z)$ is algebraic.
	\end{remark}
	
	\begin{proof}[Proof of Corollary \ref{cor:Kum2}]
		When $X$ is of $\mathrm{Kum}^2$-type, the complement of $A_2^{\bullet}(X)$ in~$H^{\bullet}(X,\QQ)$ consists of the odd cohomology and of an $80$-dimensional space of Hodge classes in~$H^4(X,\QQ)$, by \cite[Example 4.6]{looijenga1997lie}. 
		The classes in this $80$-dimensional subspace of the middle cohomology remain Hodge on any deformation of~$X$, and Hassett and Tschinkel have shown in \cite[Theorem 4.4]{hassettTschinkel} that they are all algebraic. Together with our Theorem \ref{thm:main}, this implies that the Hodge conjecture holds for any $X$ of $\mathrm{Kum}^2$-type.
	\end{proof}
	\begin{proof}[Proof of Corollary \ref{cor:TCKum2}]
		As mentioned in the introduction, the Mumford-Tate conjecture has been proven for any hyper-K\"ahler variety $X/k$ of known deformation type, by work of the first author \cite{floccari2019}, Soldatenkov \cite{soldatenkov19} and of the first author with Fu and Zhang~\cite{FFZ}. The final result may be found in \cite[Theorem 1.18]{FFZ}. As a consequence, the Galois representations on $H^{j}_{\text{\'et}}(X_{\bar{k}},\QQ_{\ell})$ are semisimple, and the Tate conjecture for $X/k$ is equivalent to the Hodge conjecture for $X_{\mathbb{C}}$ (see \cite[Proposition 2.3.2]{moonen17}). Therefore, Corollary \ref{cor:TCKum2} follows from Corollary~\ref{cor:Kum2}.
	\end{proof} 
	
	
	\bibliographystyle{smfplain}
	\bibliography{bibliographyNoURL}{}
	
\end{document}